\theoremstyle{plain}
\newcommand{\tens}[1][]{\mathbin{\otimes_{\raise1.5ex\hbox to-.1em{}{#1}}}}
\newcommand{\lltens}[1][]{{\mathop{\tens}\limits^{\rm \mathbb{L}}}_{#1}}
\DeclareMathOperator{\ch}{ch}
\DeclareMathOperator{\supp}{supp}
\DeclareMathOperator{\td}{td}
\DeclareMathOperator{\pr}{pr}
\newcommand{\oo }{\ensuremath{\mathcal{O}}}
\newcommand{\ff }{\ensuremath{\mathcal{F}}}
\newcommand{\g }{\ensuremath{\mathcal{G}}}
\newcommand{\tim }{\ensuremath{\times}}
\newtheorem{theorem}{Theorem}[section]
\newtheorem{lemma}[theorem]{Lemma}
\newtheorem{proposition}[theorem]{Proposition}
\theoremstyle{definition}}
\theoremstyle{definition}}
\theoremstyle{definition}}
\theoremstyle{definition}}
\theoremstyle{definition}\newtheorem{definition}[theorem]{Definition}}
\theoremstyle{definition}}
\theoremstyle{definition}}
\address{CMI, UMR CNRS 6632 (LATP) \\
Universit\'e de Provence\\
39, rue Fr\'ed\'eric Joliot-Curie\\
13453 Marseille Cedex 13\\France\\}
\email{jgrivaux@cmi.univ-mrs.fr}
\newcommand{\he}{^{\vphantom{*}} }
\newcommand{\be}{_{\vphantom{i}} }
\newcommand{\ti }[1]{\ensuremath{\widetilde{#1}}}
\newcommand{\ee }{\ensuremath{^{\, *}}}
\newcommand{\bop }{\ensuremath{\bigoplus\limits}}
\newcommand{\pe }{\ensuremath{^{\, !}}}
\def\P{\ensuremath{\mathbb P}}
\def\apl#1#2#3{#1\mkern -4 mu:\mkern - 8 mu
\xymatrix@C=17pt{#2\!\ar[r]&\!#3}
}
\def\apllongue#1#2#3{#1\mkern -4 mu:\mkern - 8 mu
\xymatrix{#2\!\ar[r]&\!#3}
}
\def\fl{\xymatrix@C=17pt{
\ar[r]&
}}
\def\flcourte{\xymatrix@C=12pt{
\ar[r]&
}}
\def\flgd#1#2{\xymatrix@C=17pt{#1\!
\ar[r]&\!#2
}}
\DeclareMathOperator{\id}{id}
\author{Julien Grivaux}
\title[On a conjecture of Kashiwara]{On a conjecture of Kashiwara relating Chern and Euler classes of $\oo$-modules}
\begin{document}
\begin{abstract}
 In this note we prove a conjecture of Kashiwara, which states that the Euler class of a coherent analytic sheaf $\ff$ on a complex manifold $X$ is the product of the Chern character of $\ff$ with the Todd class of $X$. As a corollary, we obtain a functorial proof of the Grothendieck-Riemann-Roch theorem in Hodge cohomology for complex manifolds.
\end{abstract}
\maketitle
\section{Introduction}
\setcounter{footnote}{0}
The notation used throughout this article is defined in \S \ref{SecDeux}.
\par\medskip
Let $X$ be a complex manifold, $\omega_{X}\he$ be the holomorphic dualizing complex of $X$, $\delta _{X}\he$ be the diagonal injection of $X$ in $X \times X$ and $\textrm{D}_{\textrm{coh}}^{\textrm{b}}(X)$ be the full subcategory of the bounded derived category of analytic sheaves on $X$ consisting of objects with coherent cohomology. In the letter \cite{Ka} which is reproduced in Chapter 5 of \cite{KaSc}, Kashiwara constructs for every $\ff$ in $\textrm{D}_{\textrm{coh}}^{\textrm{b}}(X)$ two cohomology classes
$\textrm{hh}_{X}\he(\ff)$ and $\textrm{thh}_{X}\he(\ff)$ in $\textrm{H}_{\supp(\ff)}^{0}(X,\delta _{X}^{*}\delta _{X*}\he\oo_{X}\he)$ and $\textrm{H}_{\supp(\ff)}^{0}(X,\delta _{X}^{!}\delta _{X!}\he\,\omega _{X}\he)$; they are the Hochs\-child and co-Hochschild classes of $\ff$.
\par \medskip
Let us point out that characteristic classes in Hoch\-schild homology are well-known in homological algebra (see \cite[\S 8]{Loday}). They have been recently intensively studied in various algebraico-geometric contexts. For further details, we refer the reader to \cite{Ca1}, \cite{Ca2}, \cite{Sh} and to the references therein.
\par \medskip
If $\apl{f}{X}{Y}$ is a holomorphic map, the classes $\textrm{hh}_{X}\he$ and $\textrm{thh}_{X}\he$ satisfy the following dual functoriality properties:
\par\smallskip
\hspace{0.4 cm}-- for every $\g$ in $\textrm{D}_{\textrm{coh}}^{\textrm{b}}(Y)$,
$\textrm{hh}_{X}\he(f^{*}\be\g)=f^{*}\be\,\textrm{hh}_{Y}\he(\g)$,
\par \smallskip
\hspace{0.4 cm}-- for every $\ff$ in $\textrm{D}_{\textrm{coh}}^{\textrm{b}}(X)$ such that
$f$ is proper on $\supp(\ff)$,
\[
\textrm{thh}_{Y}\he(Rf_{!}\he\,\ff)=f_{!}\he\,\textrm{thh}_{X}\he(\ff).
\]
\par \medskip
The analytic Hochschild-Kostant-Rosenberg isomorphisms construc\-ted in \cite{Ka} are specific isomorphisms
\[
 \delta _{X}^{*}\delta_{X*}\he\oo_{X}\he\simeq\xymatrix{\bop\limits_{i \geq 0}}\Omega_{X}^{\, i}[i]\qquad \textrm{and}\qquad\delta _{X}^{!}\delta_{X!}\he\,\omega _{X}\he\simeq\xymatrix{\bop\limits_{i \geq 0}} \Omega_{X}^{\, i}[i]
\]
in $\textrm{D}_{\textrm{coh}}^{\textrm{b}}(X)$. The Hochschild and co-Hochschild classes of an element $\ff$ in $\textrm{D}_{\textrm{coh}}^{\textrm{b}}(X)$ are mapped via the above HKR isomorphisms to the so-called Chern and Euler classes of $\ff$ in $\smash[b]{\bop_{i \geq 0} \textrm{H}_{\supp(\ff)}^{i}(X,\Omega _{X}^{\,i})}$, denoted by $\smash{\textrm{ch}(\ff^{\vphantom{}})}$ and $\smash{\textrm{eu}(\ff)}$.
\par
\medskip
The natural morphism
\[
\xymatrix{\bop_{i \geq 0} \textrm{H}_{\supp(\ff)}^{i}(X,\Omega _{X}^{\,i})\ar[r]&\bop_{i \geq 0} \textrm{H}\be^{i}(X,\Omega _{X}^{\,i})}
\] maps $\textrm{ch}(\ff)$ to the usual Chern character of $\ff$ in Hodge cohomology, which is obtained by taking the trace of the exponential of the Atiyah class of the tangent bundle $TX$\footnote{This property has been proved in \cite{Ca2} for algebraic varieties using different definitions of the HKR isomorphism and of the Hochschild class. In Kashiwara's setting, this is straightforward.}\!\!\!.
\par
\medskip
The Chern and Euler classes satisfy the same functoriality properties as the Hochschild and co-Hochschild classes, namely for every
 holomorphic map $f$ from $X$ to $Y$:
 \par \smallskip
\hspace{0.4 cm}-- for every $\g$ in $\textrm{D}_{\textrm{coh}}^{\,\textrm{b}}(Y)$,
$\ch(f^{*}\be\g)=f^{*}\ch(\g)$,
\par\smallskip
\hspace{0.4 cm}-- for every $\ff$ in $\textrm{D}_{\textrm{coh}}^{\,\textrm{b}}(X)$ such that
$f$ is proper on $\supp(\ff)$,
\[
\textrm{eu}(Rf_{!}\he\,\ff)=f_{!}\he\,\textrm{eu}(\ff).
\]
Furthermore, for every $\ff$ in $\textrm{D}_{\textrm{coh}}^{\textrm{b}}(X)$,
$\textrm{eu}(\ff)=\ch(\ff)\,\textrm{eu}(\oo_{X}\he)$. Put\-ting together the previous identity with the functoriality of the Euler class with respect to direct images, Kashiwara obtained the following Gro\-then\-dieck-Riemann-Roch theorem:
\begin{theorem}\label{KaGRR}\cite{Ka}
 Let $\apl{f}{X}{Y}$ be a holomorphic map and $\ff$ be an element of $\emph{D}_{\emph{coh}}^{\emph{b}}(X)$ such that $f$ is proper on $\supp(\ff)$. Then the following identity holds in $\bop_{i \geq 0} \emph{H}^{i}_{f[\supp(\ff)]}(Y,\Omega _{Y}^{\,i})\! \!:$
\[
 \ch(Rf_{!}\he\,\ff)\,\emph{eu}(\oo_{Y}\he)=f_{!}\he\,\bigl[\ch(\ff)\,\emph{eu}(\oo_{X}\he)\bigr].
\]
\end{theorem}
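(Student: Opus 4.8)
The plan is to deduce Theorem \ref{KaGRR} as a purely formal consequence of the three structural facts recalled above: the multiplicativity relation $\textrm{eu}(\ff)=\ch(\ff)\,.\,\textrm{eu}(\oo_{X}\he)$ valid for every object of $\textrm{D}_{\textrm{coh}}^{\,\textrm{b}}(\oo_{X}\he)$, the pullback compatibility $\ch(f^{*}\be\g)=f^{*}\ch(\g)$, and the pushforward functoriality $\textrm{eu}(Rf_{!}\he\,\ff)=f_{!}\he\,\textrm{eu}(\ff)$ of the Euler class. In fact only the first and third will be used.

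First I would observe that, because $f_{\vert\supp(\ff)}\he$ is proper, Grauert's coherence theorem guarantees that $Rf_{!}\he\,\ff$ is a well-defined object of $\textrm{D}_{\textrm{coh}}^{\,\textrm{b}}(\oo_{Y}\he)$ with $\supp(Rf_{!}\he\,\ff)\subseteq f(\supp(\ff))$; hence $\ch(Rf_{!}\he\,\ff)$ is defined and lies in $\bop_{i=0}^{d_{Y}\he}H_{f(\supp(\ff))}^{i}(Y,\Omega _{Y}^{\,i})$, which is a module over $\bop_{i}H^{i}(Y,\Omega _{Y}^{\,i})$, so that the asserted identity makes sense in that group.

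The remaining argument is then a three-step chain. Applying the multiplicativity relation on $Y$ to the object $\g=Rf_{!}\he\,\ff$ gives $\ch(Rf_{!}\he\,\ff)\,.\,\textrm{eu}(\oo_{Y}\he)=\textrm{eu}(Rf_{!}\he\,\ff)$. Applying the pushforward functoriality of the Euler class rewrites the right-hand side as $f_{!}\he\,\textrm{eu}(\ff)$. Finally, applying the multiplicativity relation on $X$ in the form $\textrm{eu}(\ff)=\ch(\ff)\,.\,\textrm{eu}(\oo_{X}\he)$, together with the additivity of $f_{!}\he$ on cohomology supported in $\supp(\ff)$, identifies this with $f_{!}\he\,\bigl[\ch(\ff)\,.\,\textrm{eu}(\oo_{X}\he)\bigr]$, which is the claimed formula.

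Because the theorem is obtained in this way by chaining the three input properties, there is essentially nothing to overcome in this last step itself: the whole weight of the paper lies in establishing the multiplicativity relation $\textrm{eu}(\ff)=\ch(\ff)\,.\,\textrm{eu}(\oo_{X}\he)$ — equivalently, Kashiwara's conjecture that $\textrm{eu}(\oo_{X}\he)=\td(X)$ — which requires the construction of the Hochschild and co-Hochschild classes, the HKR isomorphisms in degree zero, and a careful comparison with the Atiyah-class Chern character; that is where I would expect the main obstacle to be.
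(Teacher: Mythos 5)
Your derivation is correct and is exactly the argument the paper intends: Theorem \ref{KaGRR} is quoted from Kashiwara and follows formally from the listed properties by the chain $\ch(Rf_{!}\,\ff)\,.\,\textrm{eu}(\oo_{Y}) = \textrm{eu}(Rf_{!}\,\ff) = f_{!}\,\textrm{eu}(\ff) = f_{!}\bigl[\ch(\ff)\,.\,\textrm{eu}(\oo_{X})\bigr]$, which is precisely what the paper's phrase ``this construction yields'' refers to. You also correctly locate the real content elsewhere (in the construction of the classes and, for this paper, in identifying $\textrm{eu}(\oo_{X})$ with $\td(X)$).
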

Then Kashiwara stated the following conjecture (see \cite[\S 5.3.4]{KaSc}):
\par\medskip
\textbf{Conjecture.} \cite{Ka} \emph{For any complex manifold $X$, the class $\emph{eu}(\oo_{X}\he)$ is the Todd class of the tangent bundle $TX$.}
\par\medskip
This conjecture was related to another conjecture of Schapira and Schneiders comparing the Euler class of a $\mathscr{D}_{X}\he$-module $\mathfrak{m}$ and the Chern class of the associated $\oo_{X}\he$-module $\textrm{Gr}(\mathfrak{m})$ (see \cite{ScSc}, \cite{BNT}).
\par\medskip
The aim of this note is to give a simple proof of Kashiwara's conjecture:
\begin{theorem}\label{Main}
For any complex manifold $X$, $\emph{eu}(\oo_{X}\he)$ is the Todd class of $TX$\!.
\end{theorem}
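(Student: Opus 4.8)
\emph{The plan} is to identify both $\mathrm{eu}(\oo_{X})$ and $\td(X)$ as \emph{universal} characteristic classes of the tangent bundle, and then to force their equality by a single numerical computation coming from the Grothendieck--Riemann--Roch theorem \ref{KaGRR}, from C\u{a}ld\u{a}raru's comparison of $\ch$ with the Atiyah--Chern character, and from the classical Hirzebruch--Riemann--Roch theorem.

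First I would record the basic reduction. Applying Theorem \ref{KaGRR} to the constant map $f\colon X\to\mathrm{pt}$, for $X$ compact, and using that $\mathrm{eu}(\oo_{\mathrm{pt}})=1$, one gets for every $\ff$ in $\mathrm{D}^{\mathrm{b}}_{\mathrm{coh}}(\oo_{X})$
\[
\chi(X,\ff)=\int_{X}\ch(\ff)\,.\,\mathrm{eu}(\oo_{X}).
\]
By C\u{a}ld\u{a}raru's theorem $\ch(\ff)$ is the usual Dolbeault Chern character, so the classical Hirzebruch--Riemann--Roch theorem lets one also write $\chi(X,\ff)=\int_{X}\ch(\ff)\,.\,\td(X)$; hence $\int_{X}\ch(\ff)\,.\,\bigl(\mathrm{eu}(\oo_{X})-\td(X)\bigr)=0$ for every coherent $\ff$, equivalently for every holomorphic vector bundle $\ff$ on $X$. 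Taking $X=\P^{n_{1}}\times\dots\times\P^{n_{k}}$, for which $\bigoplus_{i}\hi{i}(X,\Omega_{X}^{i})=H^{\mathrm{even}}(X,\C)$ is spanned over $\Q$ by the Chern characters of the line bundles $\oo(m_{1},\dots,m_{k})$ and carries a perfect Serre-duality pairing, this forces $\mathrm{eu}(\oo_{X})=\td(X)$ on every product of projective spaces.

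Then I would upgrade this to all complex manifolds using the key structural fact that $\mathrm{eu}(\oo_{X})$ is a universal polynomial in the Chern classes $c_{j}(TX)$: the Euler class is built out of $\delta_{X}^{*}\delta_{X*}\oo_{X}$ and the Hochschild--Kostant--Rosenberg isomorphism, a construction that is local on $X$ and natural for isomorphisms, so it is controlled by $TX$ alone; since it already lives in $\bigoplus_{i}\hi{i}(X,\Omega_{X}^{i})$ it must be expressible through the $c_{j}(TX)$ with coefficients independent of $X$, and the same is true by definition of $\td(X)$. As universal power series in the Chern classes, $\mathrm{eu}(\oo)$ and $\td$ then agree whenever the bundle in question is the tangent bundle of a product of projective spaces; since such bundles detect every polynomial identity among the universal Chern classes, the two power series coincide, which is the theorem.

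The main obstacle is precisely the universality statement used in the last step --- showing rigorously that the Euler class of $\oo_{X}$, which is assembled from the formal neighbourhood of the diagonal via HKR, is a polynomial expression in $c(TX)$ with $X$-independent coefficients; once this normalisation statement is available, everything else is the GRR identity plus a Vandermonde-type computation on products of projective spaces. A convenient additional tool is the blow-up compatibility: if $\pi\colon\widetilde X\to X$ is the blow-up of $X$ along a submanifold then $R\pi_{*}\oo_{\widetilde X}=\oo_{X}$, so the functoriality of the Euler class gives $\pi_{!}\mathrm{eu}(\oo_{\widetilde X})=\mathrm{eu}(\oo_{X})$ and Theorem \ref{KaGRR} gives $\pi_{!}\td(\widetilde X)=\td(X)$, exhibiting $\mathrm{eu}(\oo_{X})-\td(X)$ as a blow-up invariant that vanishes in the projective case.
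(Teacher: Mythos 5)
Your reduction to products of projective spaces is fine as far as it goes: on such $X$, comparing Theorem \ref{KaGRR} for the constant map with the classical Hirzebruch--Riemann--Roch theorem, together with C\u{a}ld\u{a}raru's identification of $\ch$ with the Atiyah--Chern character and the nondegeneracy of the Serre pairing, does give $\mathrm{eu}(\oo_{X})=\td(X)$ there. The genuine gap is the step you yourself flag as ``the main obstacle'': the claim that $\mathrm{eu}(\oo_{X})$ is a universal power series in the Chern classes of $TX$ with $X$-independent coefficients. ``The construction is local and natural, hence controlled by $TX$ alone'' is not an argument: for a general complex manifold $\bigoplus_{i}H^{i}(X,\Omega_{X}^{\,i})$ is much larger than the subring generated by the Atiyah--Chern classes of $TX$, and naturality under isomorphisms does not by itself confine a class to that subring. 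Making the universality precise amounts to identifying $\mathrm{eu}(\oo_{X})$ with a $GL$-invariant formal function of the Atiyah class of $TX$ --- essentially what Markarian and Ramadoss do in the algebraic case, and where all the work lies. Without it your argument proves nothing beyond (products of) projective spaces, whereas the theorem concerns arbitrary, in particular non-compact and non-algebraic, complex manifolds. A secondary gap: even granting universality, the assertion that tangent bundles of products of projective spaces ``detect every polynomial identity among the universal Chern classes'' is not automatic --- these bundles are very special, being stably sums of line bundles --- and what you actually need is the classical cobordism-theoretic fact that the matrix of Chern numbers of products of projective spaces in each dimension is nonsingular; that input must be cited and the deduction from an identity of Chern numbers to an identity of universal polynomials spelled out. (Also, in your closing remark, Theorem \ref{KaGRR} applied to a blow-up gives $\pi_{!}\,\mathrm{eu}(\oo_{\widetilde X})=\mathrm{eu}(\oo_{X})$, not $\pi_{!}\td(\widetilde X)=\td(X)$; the latter is a separate classical computation.)

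For comparison, the paper avoids universality and compactness entirely. It first proves a Riemann--Roch formula for closed embeddings with values in cohomology with supports (Lemma \ref{LemUn}, by deformation to the normal cone), deduces that $\alpha(X)=\mathrm{eu}(\oo_{X})\,.\,\td(X)^{-1}$ satisfies $i_{Z}^{*}\,\alpha(Y)=\alpha(Z)$ whenever $Z\suq Y$ admits a holomorphic retraction (Lemma \ref{LemDeux}), applies this to the diagonal $X\suq X\times X$ together with compatibility of $\mathrm{eu}$ and $\td$ with external products to get $\alpha(X)\,.\,\alpha(X)=\alpha(X)$ (Lemma \ref{LemTrois}), and concludes $\alpha(X)=1$ because its degree-zero component equals $1$. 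If you want to salvage your route, the missing normalisation statement is exactly what you would have to prove, and it is not easier than the theorem itself.
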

In the algebraic setting, an analogous result is established in \cite{Ra} (see also \cite{Ma}).
\par\medskip
As a corollary of Theorem \ref{Main}, we obtain the Grothendieck-Riemann-Roch theorem in Hodge cohomology for abstract complex manifolds, which has been already proved by different methods in \cite{OBToTo}:
\begin{theorem}\label{Suite}
 Let $\apl{f}{X}{Y}$ be a holomorphic map between complex manifolds, and let $\ff$ be an element of
$\emph{D}_{\emph{coh}}^{\emph{b}}(X)$ such that $f$ is proper on $\supp(\ff)$. Then
\[
 \ch(Rf_{!}\he\,\ff)\td(Y)=f_{!}\he\,\bigl[\ch(\ff)\td(X)\bigr].
\]
in $\bop_{i \geq 0} \emph{H}^{i}_{f[\supp(\ff)]}(Y,\Omega _{Y}^{\,i})$.
\end{theorem}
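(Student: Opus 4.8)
The plan is to obtain Theorem \ref{Suite} as a direct formal consequence of Theorem \ref{KaGRR} together with Theorem \ref{Main}, with essentially no further argument needed. First I would recall that, by definition, $\td(X)$ stands for the Todd class $\td(TX)$ of the holomorphic tangent bundle of $X$; since $TX$ is a locally free $\oo_{X}\he$-module, this class is well defined in $\bigl[\bop_{i=0}^{d_{X}\he}H^{i}(X,\Omega _{X}^{\,i})\bigr]^{*}$ by the three characterizing properties recalled above (normalization on line bundles, multiplicativity in short exact sequences, functoriality under holomorphic pullbacks).

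Now let $\apl{f}{X}{Y}$ be a holomorphic map and let $\ff$ be an element of $\textrm{D}_{\textrm{coh}}^{\,\textrm{b}}(X)$ such that $f_{\vert\supp(\ff)}\he$ is proper. I would first apply Theorem \ref{KaGRR}, which gives the identity
\[
\ch(Rf_{!}\he\,\ff)\,.\,\textrm{eu}(\oo_{Y}\he)=f_{!}\he\,\bigl[\ch(\ff)\,.\,\textrm{eu}(\oo_{X}\he)\bigr]
\]
in $\bop_{i=0}^{d_{Y}\he}H^{i}_{f(\supp(\ff))}(Y,\Omega _{Y}^{\,i})$. Then I would apply Theorem \ref{Main}, separately on $X$ and on $Y$, to substitute $\textrm{eu}(\oo_{X}\he)=\td(X)$ and $\textrm{eu}(\oo_{Y}\he)=\td(Y)$ into this equality. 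The result is precisely
\[
\ch(Rf_{!}\he\,\ff)\,.\,\td(Y)=f_{!}\he\,\bigl[\ch(\ff)\,.\,\td(X)\bigr],
\]
which is the statement of Theorem \ref{Suite}.

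I do not expect the deduction itself to present any serious obstacle: all the substantive ingredients --- the functoriality of the Chern character under pullback, that of the Euler class under proper pushforward, and the multiplicativity $\textrm{eu}(\ff)=\ch(\ff)\,.\,\textrm{eu}(\oo_{X}\he)$ --- are already built into Kashiwara's Grothendieck-Riemann-Roch theorem, stated above as Theorem \ref{KaGRR}, while the identification $\textrm{eu}(\oo_{X}\he)=\td(X)$ is exactly the content of Theorem \ref{Main}. The only point that deserves a word of care is purely bookkeeping: the dot product occurring in both displayed formulas, together with the duality pairing used to locate the Todd class, must be recognised as the same operation throughout and compatible with the Hochschild-Kostant-Rosenberg identifications, which is immediate from the constructions. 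In other words, the genuine difficulty lies entirely upstream, in the proof of Theorem \ref{Main}; granting that, Theorem \ref{Suite} follows with no additional work.
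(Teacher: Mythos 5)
Your proposal is correct and is exactly the deduction the paper intends: Theorem \ref{Suite} is presented there as an immediate corollary of Theorem \ref{KaGRR} once Theorem \ref{Main} supplies the identifications $\textrm{eu}(\oo_{X}\he)=\td(X)$ and $\textrm{eu}(\oo_{Y}\he)=\td(Y)$, and the paper gives no further argument. Nothing is missing.
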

However, the proof given here is simpler and more conceptual. Besides, we would like to emphasize that it is entirely self-contained and relies only on the results
appearing in Chapter 5 of \cite{KaSc}.
\par\bigskip
\textbf{Acknowledgement.} I want to thank Masaki Kashiwara and Pierre Schapira for communicating their preprint \cite{KaSc} to me. I also want to thank Pierre Schapira for useful conversations, and Joseph Lipman for interesting comments.
\section{Notations and basic results}\label{SecDeux}
\setcounter{footnote}{1}
We follow the notation of \cite[Ch.\!\! 5]{KaSc}.
\par\medskip
If $X$ is a complex manifold, we denote by
$\textrm{D}^{\textrm{b}}\be(X)$ the bounded derived category of sheaves of $\oo_{X}\he$-modules and by $\textrm{D}^{\textrm{b}}_{\textrm{coh}}(X)$ the full subcategory of $\textrm{D}^{\textrm{b}}\be(X)$ consisting of complexes with coherent cohomology.
\par \medskip
If $\smash{\apl{f}{X}{Y}}$ is a holomorphic map between complex manifolds, the four ope\-rations
$\smash{\apl{f\ee\be}{\textrm{D}^{\textrm{b}}\be(Y)}{\textrm{D}^{\textrm{b}}\be(X),}}$\! $\smash{\apl{Rf_{*}\he \,, \, Rf_{!}\he \, }{\textrm{D}^{\textrm{b}}\be(X)}{\textrm{D}^{ \textrm{b}}\be(Y)}}$\!\! and $\smash{\apl{f^{!}\be\,}{\textrm{D}^{\textrm{b}}\be(Y)}{\textrm{D}^{\textrm{b}}\be(X)}}$
are part of the formalism of Grothen\-dieck's six operations. Let us recall their definitions:
\par \smallskip
\hspace{0.4 cm}-- $f\ee\be$ is the left derived functor of the pullback functor by $f$\!, that is $\mathcal{G} \flcourte \mathcal{G} \otimes_{f^{-1}\oo_{Y}\he}\oo_{X}\he$,
\par \smallskip
\hspace{0.4 cm}-- $Rf_{*}\he$ is the right derived functor of the direct image functor $f_{*}\he$, it is the left adjoint to the functor $f^{*}$\!\!,
\par \smallskip
\hspace{0.4 cm}-- $Rf_{!}\he$ is the right derived functor of the proper direct image functor $f_{!}\he$,
\par \smallskip
\hspace{0.4 cm}-- $f\pe\be$ is the exceptional inverse image, it is the right adjoint to the functor $Rf_{!}\he$.
\par \bigskip
If $W$ is a closed complex submanifold of $Y$\!, the pullback morphism from $f\ee\be \Omega _{Y}^{\, i}[i]$ to $ \Omega _{X}^{\, i}[i]$ induces in cohomology a map
\[
\apllongue{f\ee\be}{\, \bop_{i \geq 0}\textrm{H}^{i}_{W}( Y,\Omega _{Y}^{\, i})}{\bop_{i \geq 0} \textrm{H}^{i}_{f^{-1}(W)}( X,\Omega ^{\, i}_{X}).}
\]
\par
If $Z$ is a closed complex submanifold of $X$ and if $f$ is proper on $Z$, the integration morphism
from $\Omega _{X}^{\, i+d_{X} }[i+d_{X}\he ]$ to $\Omega _{Y}^{\, i+d_{Y}}[i+d_{Y}\he]$
induces a Gysin morphism
\[
\apllongue{f_{!}\, \he}{\, \bop_{i \geq -d_X}\textrm{H}_{Z}^{i+d_{X} }(X,\Omega _{X}^{\, i+d_{X}})}
{\bop_{i \geq -d_{Y}}\textrm{H}_{f(Z)}^{i+d_{Y}}(Y,\Omega _{Y}^{\, i+d_{Y}}).}
\]
\par \bigskip
Let X be a complex manifold, $\omega_{X}\he$ be the holomorphic dualizing complex of $X$ and $\delta_{X}\he$ be the diagonal injection. If $\ff$ belongs to $\textrm{D}_{\textrm{coh}}^{\,\textrm{b}}(X)$, we define the ordinary dual (resp.\ Verdier dual) of $\ff$ by the usual formula $D'\ff=\mathcal{RH}om_{\oo_{X}\he}\he(\ff, \oo_{X}\he)$ (resp.\ $D\ff=\mathcal{RH}om_{\oo_{X}\he}\he(\ff, \omega_{X}\he)$).
 \par \bigskip
The Hochschild and co-Hochschild classes of $\ff$, denoted by $\textrm{hh}_X\he(\ff)$ and $\textrm{thh}_X\he(\ff)$, lie in $\textrm{H}_{\supp(\ff)}^{0}(X,\delta _{X}^{*}\delta _{X*}\he\oo_{X}\he)$ and $\textrm{H}_{\supp(\ff)}^{0}(X,\delta _{X}^{!}\delta _{X!}\he\,\omega _{X}\he)$ respectively.
They are constructed by the chains of maps
\[
\xymatrix@C=17pt@R=-2pt{
\textrm{hh}_{X}\he(\ff):\, \, \, \textrm{id}\ar[r]& \mathcal{RH}om(\ff, \ff)\ar[r]^-{\sim}& \delta_{X}^{*} (D^{'}\!\ff \boxtimes_{\oo} \ff)\ar[r]& \delta_{X}^{*} \delta_{X*}\he \, \oo_{X}\he\\
\textrm{thh}_X\he(\ff):\, \, \, \textrm{id}\ar[r]&\mathcal{RH}om(\ff, \ff)\ar[r]^-{\sim}& \delta_{X}^{!} (D\ff \boxtimes_{\oo} \ff)\ar[r]& \delta_{X}^{!} \delta_{X!}\he \,\omega_{X}\he
}
\]
where in both cases the last arrows are obtained from the derived trace maps
\[
\flgd{D'{}\ff \,\lltens{}_{\oo_{X}\he} \ff }{\oo_{X}\he}\quad\textrm{and}\quad
\flgd{D\ff \, \lltens{}_{\oo_{X}\he} \ff }{\omega_{X}\he}
\]
by adjunction.
\par \bigskip
If $\apl{f}{X}{Y}$ is a holomorphic map between complex manifolds, there are pullback and push-forward morphisms
\[
\apl{f^{*}}{f^{*}\delta_{Y}^{*} \delta_{Y*}\he \, \oo_Y\he}{\delta_{X}^{*} \delta_{X*}\he \, \oo_{X}\he}\ \textrm{and}\quad \apl{f_{!}\, }{\, Rf_{!}\, \delta_{X}^{!} \delta_{X!}\he \omega_{X}\he}{\delta_{Y}^{!} \delta_{Y!}\he \omega_{Y}\he.}
\]
Besides, there is a natural pairing
\begin{equation}\label{un}
\delta_{X}^{*} \delta_{X*}\he \, \oo_{X}\he \lltens{}_{\oo_{X}
\he}\he \, \delta_{X}^{!} \delta_{X!}\he \, \omega_{X}\he \flcourte \delta_{X}^{!} \delta_{X!}\he \, \omega_{X}\he
\end{equation}
given by the chain
\[
\delta_{X}^{*} \delta_{X*}\he \, \oo_{X}\he \lltens{}_{\oo_{X}
\he}\he \, \delta_{X}^{!} \delta_{X!}\he \, \omega_{X}\he \simeq
\delta_{X}^{!} (\delta_{X*}\he \, \oo_{X}\he \lltens{}_{\oo_{X \times X}
\he}\he \, \delta_{X!}\he \, \omega_{X}\he) \flcourte
\delta_{X}^{!} \delta_{X!}\he \, \omega_{X}\he.
\]
\begin{theorem}\cite{Ka} \label{boss1}
\begin{enumerate}
\item [(i)] For all elements $\ff$ and $\g$ in $\textrm{D}_{\emph{coh}}^{\,\emph{b}}(X)$ and $\textrm{D}_{\emph{coh}}^{\,\emph{b}}(Y)$ respectively,
$\emph{hh}_{X}\he (f^{*}\g)=f^{*}\, \emph{hh}_{Y}\he(\g)$ and
$f_{!}\, \emph{thh}_{X}\he(\ff)=\emph{thh}_{Y}\he (Rf_{!}\ff)$.
\par \smallskip
\item [(ii)] Via the pairing \emph{(}\ref{un}\emph{)}, for every $\ff$ in
$\textrm{D}_{\emph{coh}}^{\,\emph{b}}(X)$,
\[
\emph{hh}_{X}\he(\ff)\, \emph{thh}(\oo_{X}\he)=\emph{thh}_{X}\he (\ff).
\]
\end{enumerate}

\end{theorem}
The Hochschild and co-Hochschild classes are translated into Hodge co\-ho\-mology classes by Kashiwara's analytic Hochschild-Kostant-Rosen\-berg isomorphisms\footnote{For a detailed account of the HKR isomorphisms, we refer to the introduction of \cite{Gr} and to the references therein.}
\begin{equation} \label{deux}
 \delta _{X}^{*}\delta_{X*}\he\oo_{X}\he\simeq\xymatrix{\bop\limits_{i \geq 0}} \Omega_{X}^{\, i}[i]\qquad \textrm{and}\qquad\delta _{X}^{!}\delta_{X!}\he\,\omega _{X}\he\simeq\xymatrix{\bop\limits_{i \geq 0}} \Omega_{X}^{\, i}[i],
\end{equation}
the resulting classes are called Chern and Euler classes. \!\!If $\ff$ is an ele\-ment of $\textrm{D}_{\textrm{coh}}^{\textrm{b}}(X)$, then $\textrm{ch}(\ff)$ and $\textrm{eu}(\ff)$ lie in $\bop_{i \geq 0} \textrm{H}^{i}_{\supp(\ff)}(X,\Omega _{X}^{\,i}).$
\par \medskip
The first HKR isomorphism commutes with pullback and the second one with push forward. Besides, the pairing (\ref{un}) between $\delta_{X}^{*} \delta_{X*}\he \, \oo_{X}\he $ and $\delta_{X}^{!} \delta_{X!}\he \, \omega_{X}\he$ is exactly the cup-product on holomorphic differential forms after applying the HKR isomorphisms (\ref{deux}).

\begin{theorem}\label{boss2}\cite{Ka}
\begin{enumerate}
\item[(i)]  For every $\ff$ in $\textrm{D}_{\emph{coh}}^{\,\emph{b}}(X)$, $\emph{ch}(\ff)$ is the usual Chern character obtained by the Atiyah exact sequence.
\par \smallskip
\item [(ii)] For all elements $\ff$ and $\g$ in $\textrm{D}_{\emph{coh}}^{\,\emph{b}}(X)$ and $\textrm{D}_{\emph{coh}}^{\,\emph{b}}(X)$ respectively,
$\emph{ch}(f^{*}\g)=f^{*}\, \emph{ch}(\g)$ and $f_{!}\, \emph{eu}(\ff)=\emph{eu} (Rf_{!}\ff)$.
\par\smallskip
\item [(iii)] For every $\ff$ in $\textrm{D}_{\emph{coh}}^{\,\emph{b}}(X)$, $\emph{eu(\ff)}=\emph{ch}(\ff)\,\emph{eu}(\oo_X)$.
\end{enumerate}
\end{theorem}
For the proofs of Theorems \ref{boss1} and \ref{boss2}, we refer to \cite[Ch.\ 5]{KaSc}.
\par \medskip
For any complex manifold $X$, we denote by $\textrm{td}(X)$ the Todd class of the holomorphic tangent bundle $TX$ in $\bop_{i \geq 0}\textrm{H}^{i}(X,\Omega _{X}^{\, i})$.

\section{Proof of Theorem \ref{Main}}
We proceed in several steps.
\begin{proposition}\label{LemUn}
 Let $Y$ and $Z$ be complex manifolds such that $Z$ is a closed complex submanifold of $Y$, and let $i_Z \he$ be the corresponding inclusion. Then, for every coherent sheaf $\ff$ on $Z$, we have
\[
i_{Z!}\he\,[\ch(\ff)\td(Z)]=\ch(i_{Z*}\he\ff)\td(Y)
\]
in $\bop_{i \geq 0} \emph{H}^{i}_{Z}( Y,\Omega _{Y}^{\,i})$.
\end{proposition}
\begin{proof}
This is proved in the classical way using the deformation to the normal cone as in \cite[\S 15.2]{Ful}, except that we use local cohomology.
For the sake of completeness, we provide a detailed proof.
\par\bigskip
We start by a particular case:
\par \smallskip
\hspace{0.4 cm}-- $\mathcal{N}$ is a holomorphic vector bundle on $Z$, and $Y=\mathbb{P}\,(\mathcal{N} \oplus \mathcal{O}_Z)$,
\par \smallskip
\hspace{0.4 cm}-- $Z$ embeds in $Y$ by identifying $Z$ with the zero section of $\mathcal{N}$\!.
\par \medskip
Let $d$ be the rank of $\mathcal{N}$, $\pi$ be the projection of the projective bundle $\mathbb{P}\,(\mathcal{N} \oplus \mathcal{O}_Z)$
and $\mathcal{Q}$ be the universal quotient bundle on $Y$; $\mathcal{Q}$ is the quotient of $\pi^*(\mathcal{N} \oplus \mathcal{O}_Z)$ by the tautological line bundle $\smash[b]{\mathcal{O}_{\mathcal{N} \oplus \, \mathcal{O}_Z}\he(-1)}$.
Then $\mathcal{Q}$ has a canonical holomorphic section $s^{\vphantom{{(}}}$ which is obtained by the composition
\[
s \colon \mathcal{O}_Y \he \simeq \pi^{*}\be \mathcal{O}_Z \he \fl  \pi^{*}\be (\mathcal{N} \oplus \mathcal{O}_Z) \fl \mathcal{Q}.
\]
This section vanishes transversally along its zero locus, which is exactly $Z$.
Therefore,
we have a natural locally free resolution of $i_{Z!}\he\mathcal{O}_Z$ given by the Koszul complex associated with the pair $(\mathcal{Q}^{*} \be, s^{*} \be)$:
\[
\xymatrix@C=17pt{0\ar[r]&\wedge^{d}\be \mathcal{Q}^{*} \be\ar[r]&\wedge^{d-1}\be \mathcal{Q}^{*}\ar[r]&\cdots\ar[r]&\oo_{Y}\he\ar[r]&i_{Z!}\he\oo_{Z}\he \ar[r]&0.}
\]
This gives the equality
\[
\textrm{ch}(i_{Z!}\he\oo_Z\he)=\sum_{k=0}^{d} (-1)^k \textrm{ch}(\wedge^{k} \mathcal{Q}^{*} \be)=\textrm{c}_d\he(\mathcal{Q}) \, \textrm{td}(\mathcal{Q})^{-1} \be
\]
in $\bop_{i \geq 0} \textrm{H}^i\be(Y, \Omega_Y^i)$, where $\textrm{c}_d\he(\mathcal{Q})$ denotes the $d$-th Chern class of $\mathcal{Q}$ (for the last equality, see \cite[\S \ 3.2.5]{Ful}).
Since $\textrm{c}_d\he(\mathcal{Q})$ is the image of the constant class $1$ by $i_{Z!}\he$ and since $i_Z^{*}\mathcal{Q}=\mathcal{N}$\!, we get
\[
\textrm{ch}(i_{Z!}\he\oo_Z\he)=i_{Z!}\he(i_Z^{*} \textrm{td}(\mathcal{Q})^{-1} \be)=i_{Z!}\he(\textrm{td}(\mathcal{N})^{-1} \be).
\]
\par
For any coherent sheaf $\mathcal{F}$ on $Z$, we have $i_{Z!}\he \mathcal{F}=i_{Z!}\he\mathcal{O}_Z \he \, \lltens{}_{\mathcal{O}_Y \he}\he \pi^{*} \be \mathcal{F}$
so that we obtain by the projection formula
\begin{equation}\label{trois}
\textrm{ch}(i_{Z!}\he\mathcal{F})=i_{Z!}\he (\textrm{ch}(\mathcal{F}) \textrm{td}(\mathcal{N})^{-1} \be)
\end{equation}
in $\bop_{i \geq 0} \textrm{H}^i(Y, \Omega_Y^i)$. Remark now that by Theorem \ref{boss2} (ii) and (iii), we have
\[
\ch({i}_{Z!}\he\, \ff)={i}_{Z!}\he( \ch(\ff) \textrm{eu}(\oo_{Z}\he) \,{i}_{Z}^{*}\, \textrm{eu}(\oo_{Y}\he)^{-1})
\]
\par \smallskip
\noindent in $\bop_{i \geq 0} \textrm{H}_{Z}^{i}(Y,\Omega _{Y}^{\, i})$. This proves that $\ch({i}_{Z!}\he\, \ff)$ lies in the image of
\[
\apllongue{{i}_{Z!}\he}{\bop_{i \geq 0} \textrm{H}^{i}\be( Z,\Omega _{Z}^{\, i})}{ \bop_{i \geq 0} \textrm{H}_{Z}^{i+d}(Y,\Omega _{Y}^{\, i+d}).}
\]
Let us denote this image by $W$\!.
The map
\[
\apllongue{\iota}{W}{\bop_{i \geq 0} \textrm{H}^{i+d}(Y,\Omega _{Y}^{\, i+d})}
\]
obtained by forgetting the support is injective. Indeed, for every class $i_{Z!}\he \alpha$ in $W$\!, $\pi_{!}\he [\iota(i_{Z!}\he \alpha)]=\alpha$.
This implies that (\ref{trois}) holds in $\bop_{i \geq 0} \textrm{H}_{Z}^{i}(Y,\Omega _{Y}^{\, i})$.
\par\bigskip
We now turn to the general case, using deformation to the normal cone. Let us introduce some notation:
\par \smallskip
\hspace{0.4 cm}-- $M$ is the blowup of $Z\tim\{0\}$ in $Y\tim\P^{1}$\!, $\sigma$ is the blowup map and $q=\pr_{1}\he\!\circ\, \sigma$\!,
\par \smallskip
\hspace{0.4 cm}-- for any divisor $D$ on $M$, $[D]$ denotes its cohomological cycle class in $\textrm{H}^{1}(M,\Omega _{M}^{1})$,
\par \smallskip
\hspace{0.4 cm}-- $E=\P( N_{Z/Y}\he\oplus \oo_{Z}\he)$ is the exceptional divisor of the blowup and $\ti{Y}$ is the strict transform of $Y\tim\{0\}$ in $M$\!,
\par \smallskip
\hspace{0.4 cm}-- $\mu$ is the embedding of $Z$ in $E$, where $Z$ is identified with the zero section of $N_{Z/Y}\he$,
\par \smallskip
\hspace{0.4 cm}-- $F$ is the embedding of (the strict transform of) $Z\tim\P^{1}$ in $M$ and for any $t$ in $\P^1$\!, $j_t\he$ is the embedding of $M_t\he$ in $M$\!,
\par \smallskip
\hspace{0.4 cm}-- $k$ is the embedding of $E$ in $M$\!.
\par \medskip

Then $M$ is flat over $\P^{1}$\!, $M_0$ is a Cartier divisor with two smooth components $E$ and $\ti{Y}$ intersecting transversally along $\mathbb{P}(N_{Z/Y}\he)$, and $M_t\he$ is isomorphic to $Y$ if $t$ is nonzero.

\par\medskip
Let $\g=F_{!}\he (\pr_{1}^{*}\ff )$. Since $M$ is flat over $\P^{1}$\!, for any $t$ in $\P^1 \setminus \{0\}$,
\[
j_{t}\ee\g=i_{Z!}\he\ff \quad  \textrm{and} \quad k\ee\be\g=\mu_{!}\he\ff\!.
\]
If $\ch(\g)$ is the Chern character of $\g$ in $\bop_{i \geq 0} \textrm{H}^{i}_{Z\tim\P^{1}}( M,\Omega _{M}^{\, i})$, using the identity
(\ref{trois}) in $\bop_{i \geq 0} \textrm{H}^{i}_{Z}(E, \Omega _{E}^{\, i})$, we get
\par
\begin{align}
 j_{t!}\he\ch(i_{Z!}\he\ff)&=j_{t!}\he\,  j_{t}\ee\ch(\g)=\ch(\g)\, [M_{t}\he]\notag\\
&=\ch(\g)\, [M_{0}]=\ch(\g)\, [E]+ \ch(\g)\,[\ti{Y}]\notag\\
&=\ch(\g)\, [E]=k_{!}\he\, k\ee\be\ch(\g)=k_{!}\he\ch(\mu_{!}\he\ff)\notag\\
&=k_{!}\he\, \mu_{!}\he(\ch(\ff) \td(N_{Z/E}\he)^{-1})\notag\\
&=k_{!}\he\, \mu_{!}\he(\ch(\ff)\td(N_{Z/X}\he)^{-1})\notag
\end{align}
in $\bop_{i \geq 0} \textrm{H}^{i}_{Z\tim\P^{1}}( M,\Omega _{M}^{\, i})$.
\par \medskip
The map $q$ is proper on $Z\tim\P^{1}$, $q\circ j_{t}\he=\id$ and $q\circ k\circ \mu=i_{Z}\he$. Applying $q_{!}\he$, we get
\[
\ch( i_{Z!}\he\ff)=i_{Z!}\he(\ch(\ff) \td(N_{Z/X}\he)^{-1})
\]
in $\bop_{i \geq 0} \textrm{H}^{i}_{Z}(Y,\Omega _{Y}^{\, i})$.
\end{proof}
\begin{definition}
 For any complex manifold $X$, let $\alpha (X)$ be the cohomology class in $\bop_{i \geq 0}\textrm{H}^{i}(X,\Omega ^{i}_{X})$ defined by $\alpha(X)=\textrm{eu}(\oo_{X}\he)\td(X)^{-1}$\!\!.
\end{definition}
\begin{lemma}\label{LemDeux}
Let $Y$ and $Z$ be complex manifolds such that $Z$ is a closed complex submanifold of $Y$\!, and let $i_Z\he$ be the corresponding injec\-tion.
Assume that there exists a holomorphic retraction $R$ of $i_Z$. Then we have
$
\alpha (Z)=i_{Z}^{*}\,\alpha (Y).
$

\end{lemma}
\begin{proof}
By Theorem \ref{boss2} (ii), $\textrm{eu}(i_{Z*}\he\oo_{Z}\he)=i_{Z!}\he\,\textrm{eu}(\oo_{Z}\he)$. By Proposition \ref{LemUn} and Theorem \ref{boss2} (iii),
\[
 \textrm{eu}(i_{Z*}\he\oo_{Z}\he)=\ch(i_{Z*}\he\oo_{Z}\he)\textrm{eu}(\oo_{Y}\he)=
(i_{Z!}\he\td(Z))\td(Y)^{-1}\textrm{eu}(\oo_{Y}\he),
\]
so that we obtain in $\bop_{i \geq 0} \textrm{H}^{i}_{Z}( Y,\Omega _{Y}^{\, i})$ the formula
\[
 i_{Z!}\he\bigl[
\textrm{eu}(\oo_{Z}\he)-\td(Z)\,i_{Z}^{*}(\textrm{eu}(\oo_{Y}\he)\td(Y)^{-1})
\bigr]=0.
\]
\par \smallskip
Since $R$ is proper on $Z$, we can apply $R_{!}\he$ and we get the result.
\par
\end{proof}
\begin{lemma}\label{LemTrois}
 The class $\alpha(X)$ satisfies $\alpha (X)^2=\alpha (X)$.
\end{lemma}
\begin{proof}
We apply Lemma \ref{LemDeux} with $Z=X$ and $Y=X\times X$, where $X$ is diagonally embedded in $X\times X$.
Then $\alpha (X)=i_{\Delta  }^{*}\,\alpha (X\times X)$. The Euler class commutes with external products so that
\[
\textrm{eu}(\oo_{X\times X}\he)=\textrm{eu}(\oo_{X}\he)\boxtimes\, \textrm{eu}(\oo_{X}\he)
\]
\par \smallskip
Thus $\alpha (X\times X)=\alpha (X)\boxtimes\,\alpha (X)$ and we obtain
\[
\alpha (X)=i_{\Delta  }^{*}[\, \alpha (X)\boxtimes\,\alpha (X)\, ]=\alpha (X)^2\!.
\]
\end{proof}
\textit{Proof of Theorem \ref{Main}.}
There is a natural isomorphism $\phi$ in $\textrm{D}_{\textrm{coh}}^{\,\textrm{b}}(X)$ between $\delta ^{*}\be\delta _{*}\he\oo_{X}\he$ and $\delta ^{!}\be\delta _{!}\he\omega _{X}\he$ given by the chain
\[
\delta ^{*}\be\delta _{*}\he\oo_{X}\he \simeq \mathcal{O}_X \he \, \lltens{}_{\mathcal{O}_X \he} \he \delta ^{*}\be\delta _{*}\he\oo_{X}\he
\simeq \delta ^{!}(\omega_X \he \boxtimes\, \mathcal{O}_X) \, \lltens{}_{\mathcal{O}_X \he} \he \delta ^{*}\be\delta _{*}\he\oo_{X}\he
 \simeq \delta ^{!}\be\delta _{!}\he\omega _{X}\he.
\]
Besides, after applying the two HKR isomorphisms (\ref{deux}), $\phi$ is given by derived cup-product with the Euler class of $\mathcal{O}_X \he$ (see \cite{KaSc}). Therefore, the class $\textrm{eu}(\mathcal{O}_X \he)$ is invertible in the Hodge cohomology ring of $X$, and so is $\alpha(X)$. Lemma \ref{LemTrois} implies that $\alpha(X)=1$.
\par\smallskip  \hfill q.e.d.\!

\end{document}